\newcommand{\B}{\mathcal{B}}
\newcommand{\C}{ \mathbb{C}}
\newcommand{\D}{ \mathbb{D}}
\newcommand{\ran}{\operatorname{ran}}
\newcommand{\inner}[1]{\langle #1 \rangle}
\newcommand{\M}{\mathcal{M}}
\newcommand{\T}{\mathbb{T}}
\newcommand{\h}{\mathcal{H}}
\newcommand{\K}{\mathcal{K}}
\newcommand{\minimatrix}[4]{\begin{pmatrix} #1 & #2 \\ #3 & #4 \end{pmatrix}  }
\renewcommand{\phi}{\varphi}
\renewcommand{\epsilon}{\varepsilon}
\theoremstyle{plain}
\newtheorem{Theorem}{Theorem}
\newtheorem{Corollary}[Theorem]{Corollary}
\newtheorem{Lemma}{Lemma}
\theoremstyle{definition}
\newtheorem{Remark}{Remark}
\newtheorem{Question}{Problem}
\begin{document}
\bibliographystyle{amsplain}

    \title{$C^{*}$-algebras generated by truncated Toeplitz operators}

    \author[S. R. Garcia]{Stephan Ramon Garcia}
    \address{   Department of Mathematics\\
            Pomona College\\
            Claremont, California\\
            91711 \\ USA}
    \email{Stephan.Garcia@pomona.edu}
    \urladdr{\url{http://pages.pomona.edu/~sg064747}}
    \thanks{The first named author was partially supported by National Science Foundation Grant DMS-1001614.}

    \author[W. T. Ross]{William T. Ross}
\address{   Department of Mathematics and Computer Science\\
            University of Richmond\\
            Richmond, Virginia\\
            23173 \\ USA}
    \email{wross@richmond.edu}
    \urladdr{\url{http://facultystaff.richmond.edu/~wross}}

\author[W. R. Wogen]{Warren R. Wogen}
	\address{Department of Mathematics, University of North Carolina, Chapel Hill, North Carolina 27599}
	\email{wrw@email.unc.edu}
	\urladdr{\url{http://www.math.unc.edu/Faculty/wrw/}}

    \keywords{$C^*$-algebra, Toeplitz algebra, Toeplitz operator, model space, truncated Toeplitz operator, compact operator, commutator ideal.}
    \subjclass{46Lxx, 47A05, 47B35, 47B99}

	\dedicatory{Dedicated to the memory of William Arveson.}

    \begin{abstract}
	    We obtain an analogue of Coburn's description of the Toeplitz algebra in the setting of truncated Toeplitz operators.
	    As a byproduct, we provide several examples of complex symmetric operators
	    which are not unitarily equivalent to truncated Toeplitz operators having continuous symbols.
    \end{abstract}

\maketitle

\section{Introduction}
	In the following, we let $\h$ denote a separable complex Hilbert space and $\mathcal{B}(\h)$ denote the set of
	all bounded linear operators on $\h$.  For each $\mathscr{X} \subseteq \mathcal{B}(\h)$,
	let $C^*(\mathscr{X})$ denote the unital $C^*$-algebra generated by $\mathscr{X}$.  Since we are frequently
	interested in the case where $\mathscr{X} = \{A\}$ is a singleton, we often write $C^*(A)$ in place of $C^*(\{A\})$
	in order to simplify our notation.  
	
	Recall that the \emph{commutator ideal} $\mathscr{C}(C^*(\mathscr{X}))$ 
	of $C^*(\mathscr{X})$ is the smallest norm closed two-sided ideal which contains the \emph{commutators} 
	$[A,B]:=AB-BA$, where $A$ and $B$ range over all elements of $C^*(\mathscr{X})$.  Since
	the quotient algebra $$C^*(\mathscr{X})/ \mathscr{C}(C^*(\mathscr{X}))$$ is an abelian $C^*$-algebra,
	it is isometrically $*$-isomorphic to $C(Y)$, the set of all continuous functions on some compact Hausdorff 
	space $Y$ \cite[Thm.~1.2.1]{ConwayCOT}.  If we agree to denote isometric $*$-isomorphism by $\cong$, then we may write
	\begin{equation}\label{eq-CCCY}
		\frac{C^*(\mathscr{X})}{\mathscr{C}(C^*(\mathscr{X}))} \cong C(Y).
	\end{equation}
	This yields the short exact sequence
	\begin{equation}\label{eq-ShortExact}
		0 \longrightarrow \mathscr{C}(C^*(\mathscr{X})) \overset{\iota}{\longrightarrow} 
		C^*(\mathscr{X}) \overset{\pi}{\longrightarrow} C(Y) \longrightarrow 0,
	\end{equation}
	where $\iota: \mathscr{C}(C^*(\mathscr{X})) \to C^*(\mathscr{X})$ is the inclusion map
	and $\pi:C^*(\mathscr{X})\to C(Y)$ is the composition of the quotient map with
	the map which implements \eqref{eq-CCCY}.  

	The \emph{Toeplitz algebra} $C^*(T_z)$, where $T_z$ denotes the unilateral shift on the classical Hardy space $H^2$,
	has been extensively studied since the seminal work of Coburn in the late 1960s \cite{Coburn, Coburn2}.  
	Indeed, the Toeplitz algebra is now 
	one of the standard examples discussed in many well-known texts (e.g., \cite[Sect.~4.3]{MR1865513}, 
	\cite[Ch.~V.1]{Davidson}, \cite[Ch.~7]{MR1634900}).  In this setting, we have $\mathscr{C}(C^*(T_z)) = \mathscr{K}$,
	the ideal of compact operators on $H^2$, and $Y = \T$ (the unit circle), so that the short exact sequence
	\eqref{eq-ShortExact} takes the form
	\begin{equation}\label{eq-CoburnExact}
		0 \longrightarrow \mathscr{K} \overset{\iota}{\longrightarrow} C^*(T_z)
		\overset{\pi}{\longrightarrow} C(\T) \longrightarrow 0.
	\end{equation}
	In other words, $C^*(T_z)$ is an \emph{extension} of $\mathscr{K}$ by $C(\T)$.
	In fact, one can prove that
	\begin{equation*}
		C^*(T_z) = \{ T_{\phi} + K : \phi \in C(\T), K \in \mathscr{K}\}
	\end{equation*}
	and that each element of $C^*(T_z)$ enjoys a unique decomposition of the form
	$T_{\phi} + K$ \cite[Thm.~4.3.2]{MR1865513}.  Indeed, it is well-known that the only compact Toeplitz operator
	is the zero operator \cite[Cor.~1, p.~109]{MR1865513}.  We also note that the surjective map $\pi:C^*(T_z)\to C(\T)$ 
	in \eqref{eq-CoburnExact} is given by $\pi(T_{\phi}+K) = \phi$.
	
	The preceding results have spawned numerous generalizations and variants over the years.
	For instance, one can consider $C^*$-algebras generated by matrix-valued Toeplitz operators
	or by Toeplitz operators which act upon other Hilbert function spaces (e.g., the Bergman space \cite{Ross, ACM}).
	As another example, if $\mathscr{X}$ denotes the space of functions on $\T$ which are
	both piecewise and left continuous, then a fascinating result of Gohberg and Krupnik 
	asserts that $\mathscr{C}(C^*(\mathscr{X})) = \mathscr{K}$ and provides the short exact sequence
	\begin{equation*}
		0 \longrightarrow \mathscr{K} \overset{\iota}{\longrightarrow} 
		C^*(\mathscr{X}) \overset{\pi}{\longrightarrow} C(Y) \longrightarrow 0,
	\end{equation*}
	where $Y$ is the cylinder $\T \times [0, 1]$, endowed with a certain nonstandard topology \cite{GK}. 

	Along different lines, we seek here to replace Toeplitz operators with \emph{truncated Toeplitz operators}, a class of operators
	whose study has been largely motivated by a seminal 2007 paper of Sarason \cite{Sarason}.  Let us briefly
	recall the basic definitions which are required for this endeavor.  We refer the reader to Sarason's paper or to the recent
	survey article \cite{RPTTO} for a more thorough introduction.
	
	For each nonconstant inner function $u$, we consider the \emph{model space} 
	\begin{equation*}
		\K_u := H^2 \ominus u H^2,
	\end{equation*}
	which is simply the orthogonal complement of the standard Beurling-type subspace $uH^2$ of $H^2$.  Letting $P_u$ denote
	the orthogonal projection from $L^2 := L^2(\T)$ onto $\K_u$, for each $\phi$ in $L^{\infty}(\T)$ we define the 
	\emph{truncated Toeplitz operator} $A_{\phi}^u : \K_u\to\K_u$ by setting 
	\begin{equation*}
		A_{\phi}^uf = P_u(\phi f)
	\end{equation*}
	for $f$ in $\K_u$.
	The function $\phi$ in the preceding is referred to as the \emph{symbol} of the operator $A_{\phi}^u$.\footnote{It is possible
	to consider truncated Toeplitz operators with symbols in $L^2(\T)$, although we have little need to do so here.}  In particular, let us
	observe that $A_{\phi}^u$ is simply the compression of the standard Toeplitz operator $T_{\phi}:H^2 \to H^2$ to the
	subspace $\K_u$.  Unlike traditional Toeplitz operators, however, the symbol of a truncated Toeplitz is not unique.
	In fact, $A_{\phi}^u = 0$ if and only if $\phi$ belongs to $uH^2 + \overline{uH^2}$ \cite[Thm.~3.1]{Sarason}.
	
	In our work, the \emph{compressed shift} $A_z^u$ plays a distinguished role analogous
	to that of the unilateral shift $T_z$ in Coburn's theory.  In light of this, let us recall that the
	spectrum $\sigma(A_z^u)$ of $A_z^u$ coincides with the so-called \emph{spectrum}
	\begin{equation}\label{spec-u}
		\sigma(u) := \left\{\lambda \in \D^{-}: \liminf_{z \to \lambda} |u(z)| = 0\right\}
	\end{equation}
	of the inner function $u$ \cite[Lem.~2.5]{Sarason}.  In particular,
	if $u = b_{\Lambda} s_{\mu}$, where $b_{\Lambda}$ is a Blaschke product with zero sequence
	$\Lambda = \{\lambda_n\}$ and $s_{\mu}$ is a singular inner function with corresponding
	singular measure $\mu$, then
	\begin{equation*}
		\sigma(u) =\Lambda^{-} \cup \operatorname{supp} \mu.
	\end{equation*}
	
	With this terminology and notation in hand, we are ready to state our main result,
	which provides an analogue of Coburn's description of the Toeplitz algebra in the truncated Toeplitz setting.
	
	\begin{Theorem} \label{TheoremMainContinuous}
		If $u$ is an inner function, then
		\begin{enumerate}\addtolength{\itemsep}{0.6\baselineskip}
			\item $\mathscr{C}(C^*(A_z^u)) = \mathscr{K}$, the algebra of compact operators on $\mathcal{K}_{u}$,
			\item $C^*(A_z^u)/\mathscr{K}$ is isometrically $*$-isomorphic to $C(\sigma(u) \cap \T)$,
			\item For $\phi$ in $C(\T)$,  $A_{\phi}^u$ is compact if and only if $\phi(\sigma(u)\cap \T) = \{0\}$,
			\item $C^*(A_z^u) = \{ A_{\phi}^u  + K : \phi \in C(\T), K \in \mathscr{K}\}$,
			\item For $\phi$ in $C(\T)$,  $\sigma_{e}(A^{u}_{\phi}) = \phi(\sigma_{e}(A_z^u))$,
			\item For $\phi$ in $C(\T)$, $ \|A^{u}_{\phi}\|_{e}  = \sup\{ |\phi(\zeta)| : \zeta \in \sigma(u)\cap\T\}$,
			\item Every operator in $C^*(A_z^u)$ is of the form normal plus compact.
		\end{enumerate}
		\smallskip
		Moreover,
		\begin{equation*}
			0\longrightarrow\mathscr{K} \overset{\iota}{\longrightarrow} C^*(A_z^u) 
			\overset{\pi}{\longrightarrow} C(\sigma(u) \cap \T) \longrightarrow 0
		\end{equation*}
		is a short exact sequence and thus $C^*(A_z^u)$ is an extension of the compact operators
		by $C(\sigma(u) \cap \T)$.  In particular, the map $\pi:C^*(A_z^u) \to C(\sigma(u) \cap \T)$
		is given by
		\begin{equation*}
			\pi(A_{\phi}^u + K) = \phi|_{\sigma(u)\cap\T}.
		\end{equation*}
	\end{Theorem}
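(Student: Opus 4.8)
\medskip
\noindent\emph{Strategy of proof.}
The plan is to imitate Coburn's analysis of the Toeplitz algebra, replacing the Hardy-space computations by model-space ones. One may assume $\dim\K_u=\infty$: if $u$ is a finite Blaschke product then $\K_u$ is finite dimensional, $A_z^u$ is irreducible, so $C^*(A_z^u)=\B(\K_u)=\mathscr K$ and every assertion reduces to an elementary matrix statement (here $\sigma(u)\cap\T=\varnothing$). I would begin by recording the two rank-one identities
\begin{equation*}
	I-A_z^uA_z^{u*}=k_0^u\otimes k_0^u,\qquad I-A_z^{u*}A_z^u=\widetilde{k_0^u}\otimes\widetilde{k_0^u},
\end{equation*}
where $k_0^u=1-\overline{u(0)}u$ is the reproducing kernel of $\K_u$ at the origin, $\widetilde{k_0^u}$ is its conjugate, and $a\otimes b$ denotes $x\mapsto\langle x,b\rangle a$; these follow from the fact that $A_z^{u*}$ is the restriction of the backward shift to $\K_u$. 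Since $u$ is nonconstant, $k_0^u\neq 0$, so $C^*(A_z^u)$ contains a nonzero compact operator, and since the compressed shift $A_z^u$ is known to be irreducible, the standard result that an irreducible $C^*$-subalgebra of $\B(\K_u)$ containing a nonzero compact operator contains all of $\mathscr K$ gives $\mathscr K\subseteq C^*(A_z^u)$. The self-commutator $[A_z^{u*},A_z^u]$ is a difference of two rank-one operators, hence compact, and is nonzero (otherwise $A_z^u$ would be normal, forcing $\dim\K_u=1$); therefore $\pi(A_z^u)$ is a normal generator of $C^*(A_z^u)/\mathscr K$, so that quotient is commutative, whence every commutator lies in $\mathscr K$ and $\mathscr C(C^*(A_z^u))\subseteq\mathscr K$. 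Conversely $[A_z^{u*},A_z^u]$ is a nonzero element of the simple ideal $\mathscr K\subseteq C^*(A_z^u)$ lying in $\mathscr C(C^*(A_z^u))$, which forces $\mathscr K\subseteq\mathscr C(C^*(A_z^u))$ and hence equality; this is (i).

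The heart of the argument---and the step I expect to be the main obstacle---is the spectral identity $\sigma_e(A_z^u)=\sigma(u)\cap\T$. For ``$\subseteq$'' one has $\sigma_e(A_z^u)\subseteq\sigma(A_z^u)=\sigma(u)$, and a point $\lambda\in\D\cap\sigma(u)$ is a zero of $u$ of finite order $m$; writing $u=b_\lambda^m v$ with $v(\lambda)\neq 0$ and using $\K_u=\K_{b_\lambda^m}\oplus b_\lambda^m\K_v$, the operator $A_z^u-\lambda$ becomes upper triangular with invertible lower-right corner $A_z^v-\lambda$ and with upper-left corner acting on the finite-dimensional space $\K_{b_\lambda^m}$, hence is Fredholm, of index $0$. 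For ``$\supseteq$'', suppose $\zeta\in\sigma(u)\cap\T$ and, for contradiction, that $A_z^u-\zeta$ is Fredholm. Since $\sigma(A_z^u)=\sigma(u)\subseteq\D^{-}$, the exterior of $\D^{-}$ consists of Fredholm points of index $0$; as $\zeta\in\partial\D$, a small ball about $\zeta$ together with that exterior is a connected set of Fredholm points, so $\operatorname{ind}(A_z^u-\zeta)=0$. On the other hand $A_z^u-\zeta$ is injective---a point of $\T$ is never an eigenvalue of $A_z^u$, since $A_z^u f=\zeta f$ with $f\in\K_u$ gives $(z-\zeta)f\in uH^2$, and because $z-\zeta$ is outer this forces $u$ to divide the inner part of $f$, i.e.\ $f\in uH^2\cap\K_u=\{0\}$. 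An injective Fredholm operator of index $0$ is invertible, contradicting $\zeta\in\sigma(A_z^u)$. Granting this identity, (ii) is immediate: $C^*(A_z^u)/\mathscr K=C^*(\pi(A_z^u))$ is generated by the normal element $\pi(A_z^u)$, so it is isometrically $*$-isomorphic to $C(\sigma(\pi(A_z^u)))=C(\sigma_e(A_z^u))=C(\sigma(u)\cap\T)$, with $\pi(A_z^u)$ sent to the coordinate function.

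The remaining items are bookkeeping with $\pi$. From $P_u\big(z(I-P_u)(zf)\big)=0$ one gets $A_{z^k}^u=(A_z^u)^k$ and, by taking adjoints, $A_{\bar z^k}^u=(A_z^{u*})^k$ for $k\geq 0$, so $A_p^u\in C^*(A_z^u)$ for every trigonometric polynomial $p$; since $\|A_\psi^u\|\leq\|\psi\|_\infty$, passing to norm limits shows $A_\phi^u\in C^*(A_z^u)$ for all $\phi\in C(\T)$, and evaluating on trigonometric polynomials and then taking limits shows $\pi(A_\phi^u)=\phi|_{\sigma(u)\cap\T}$. Consequently (iii) holds, since $A_\phi^u$ is compact iff $\pi(A_\phi^u)=0$ iff $\phi$ vanishes on $\sigma(u)\cap\T$; (iv) holds, since for $A\in C^*(A_z^u)$ the Tietze extension theorem furnishes $\phi\in C(\T)$ with $\phi|_{\sigma(u)\cap\T}=\pi(A)$, whence $A-A_\phi^u\in\ker\pi=\mathscr K$; (v) holds by spectral permanence, as $\sigma_e(A_\phi^u)=\sigma\big(\pi(A_\phi^u)\big)=\ran\big(\phi|_{\sigma(u)\cap\T}\big)=\phi(\sigma(u)\cap\T)=\phi(\sigma_e(A_z^u))$; and (vi) holds, as $\|A_\phi^u\|_e=\|\pi(A_\phi^u)\|=\sup\{|\phi(\zeta)|:\zeta\in\sigma(u)\cap\T\}$. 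The displayed short exact sequence and the formula for $\pi$ are then restatements of (i), (ii) and (iv).

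Finally, for (vii), every $A\in C^*(A_z^u)$ is essentially normal, and by the Brown--Douglas--Fillmore theorem such an $A$ is a compact perturbation of a normal operator precisely when $\operatorname{ind}(A-\lambda)=0$ for all $\lambda\notin\sigma_e(A)$. Writing $A=A_\phi^u+K$, one has $\operatorname{ind}(A-\lambda)=\operatorname{ind}(A_{\phi-\lambda}^u)$ where $\phi-\lambda$ is nonvanishing on $\sigma(u)\cap\T$, so it suffices to prove $\operatorname{ind}(A_\psi^u)=0$ whenever $\psi\in C(\T)$ is nonvanishing on $\sigma(u)\cap\T$. For a trigonometric polynomial $\psi$, factor $\psi(\zeta)=c\,\zeta^{-N}\prod_j(\zeta-a_j)$ with $c\neq 0$ and no $a_j$ in $\sigma(u)\cap\T$; since $\pi$ is multiplicative, $A_\psi^u$ differs by a compact operator from $c\,(A_z^{u*})^N\prod_j(A_z^u-a_j)$, and every factor of this product is Fredholm of index $0$---invertible if $a_j\notin\sigma(u)$, Fredholm of index $0$ by the block argument above if $a_j\in\D$, and $A_z^{u*}$ because $0\notin\T$---so $\operatorname{ind}(A_\psi^u)=0$. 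The general case follows by uniform approximation, since the index is locally constant on the Fredholm operators and $\psi$ is bounded away from $0$ on $\sigma(u)\cap\T$. This completes the plan: essentially everything is standard $C^*$-algebra once the spectral identity $\sigma_e(A_z^u)=\sigma(u)\cap\T$ is established, so that identity---together with the index computation underpinning (vii)---is where the real work lies.
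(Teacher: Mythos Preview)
Your argument is correct, but it follows a genuinely different route from the paper's in two places worth noting. First, the paper reverses the logical order of (ii) and (iii): it proves the compactness criterion (iii) directly (its Lemma~4), using an approximate-identity argument with the normalized reproducing kernels $|k_\lambda|^2/\|k_\lambda\|^2$ along a sequence $\lambda_n\to\xi\in\sigma(u)\cap\T$ with $u(\lambda_n)\to 0$, together with analytic continuation of $\K_u$-functions across $\T\setminus\sigma(u)$; it then uses (iii) to compute the kernel of the symbol map $\gamma(\phi)=A_\phi^u+\mathscr K$ and thereby obtain (ii). You instead read (ii) off from Gelfand theory applied to the normal generator $\pi(A_z^u)$ once you know $\sigma_e(A_z^u)=\sigma(u)\cap\T$, and then deduce (iii) from the formula $\pi(A_\phi^u)=\phi|_{\sigma(u)\cap\T}$. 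Your path is cleaner $C^*$-algebra, while the paper's is more hands-on function theory and also yields the semicommutator lemma $A_\phi^u A_\psi^u-A_{\phi\psi}^u\in\mathscr K$ as a byproduct, which you bypass entirely. Second, for (vii) the paper avoids BDF altogether: it exhibits the normal operator explicitly as $\phi(U_\alpha)$, where $U_\alpha$ is a Clark unitary rank-one perturbation of $A_z^u$, and observes that $\phi(U_\alpha)\equiv A_\phi^u\pmod{\mathscr K}$. This is constructive and entirely elementary once Clark's theorem is in hand, whereas your index computation plus BDF is more systematic but leans on substantially deeper machinery. Finally, the paper simply cites $\sigma_e(A_z^u)=\sigma(u)\cap\T$ as known; your Fredholm/index proof of this identity is a nice self-contained addition.
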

	
	The proof of Theorem \ref{TheoremMainContinuous} is somewhat involved and 
	requires a number of preliminary lemmas.  It is therefore deferred until Section \ref{SectionProof}.
	However, let us remark now that the same result holds when the hypothesis that $f$ belongs to $C(\T)$ is replaced by the weaker assumption
	that $f$ is in $\mathscr{X}_{u}$, the class of $L^{\infty}$ functions which are continuous at each point of $\sigma(u) \cap \T$. 
	In fact, given $f$ in $\mathscr{X}_{u}$, there exists a $g$ in $C(\T)$ so that 
	\begin{equation*}
		A^{u}_{f} \equiv A^{u}_{g} \pmod{\mathscr{K}}. 
	\end{equation*}
	Thus one can replace $A^{u}_{f}$ with $A^{u}_{g}$ when working
	modulo the compact operators and adapt the proof of Theorem \ref{TheoremMainContinuous}
	so that $C(\T)$ is replaced by $\mathscr{X}_{u}$. 	
		
	Using completely different language and terminology, some aspects of Theorem \ref{TheoremMainContinuous} can be proven
	by triangularizing the compressed shift $A_z^u$ according to the scheme discussed at length in \cite[Lec.~V]{N1}. 
	For instance, items (vi) and (iii) of the preceding theorem are \cite[Cor.~5.1]{AC70a} and 
	\cite[Thm.~5.4]{AC70a}, respectively (we should also mention related work of Kriete \cite{MR0328659,MR0288275}).  
	From an operator algebraic perspective, however, 
	we believe that a different approach is desirable.  Our approach is similar in spirit to the original
	work of Coburn and forms a possible blueprint for variations and extensions (see Section \ref{SectionPiecewise}).  Moreover,
	our approach does not require the detailed consideration of several special cases (i.e., Blaschke products,
	singular inner functions with purely atomic spectra, etc.) as does the approach pioneered in \cite{AC70a,MR0264386}.
	In particular, we are able to avoid the somewhat involved computations
	and integral transforms encountered in the preceding references.	
	
\section{Continuous symbols and the TTO-CSO problem}
	Recall that a bounded operator $T$ on a Hilbert space $\h$ is called
	\emph{complex symmetric} if there exists a conjugate-linear, isometric involution $J$ on $\h$ such that
	$T = JT^*J$.  It was first recognized in \cite[Prop.~3]{G-P} that every truncated Toeplitz
	operator is complex symmetric (see also \cite{CCO} where this is discussed in great detail).  
	This hidden symmetry turns out to be a crucial ingredient in Sarason's general treatment of 
	truncated Toeplitz operators \cite{Sarason}.

	A significant amount of evidence is mounting that truncated Toeplitz operators 
	may play a significant role in some sort of model theory for complex symmetric operators.  Indeed,
	a surprising and diverse array of complex symmetric operators can be concretely realized in terms of 
	truncated Toeplitz operators (or direct sums of such operators).
	The recent articles \cite{ATTO, CRW, TTOSIUES, STZ} all deal with various aspects of this problem
	and a survey of this work can be found in \cite[Sect.~9]{RPTTO}.
	
	It turns out that viewing truncated Toeplitz operators in the $C^*$-algebraic setting can shed some light on the question
	of whether every complex symmetric operator can be written in terms of truncated Toeplitz operators
	(the \emph{TTO-CSO Problem}).  
	Corollaries \ref{CorA} and \ref{CorB} below provide examples of complex symmetric operators which are not
	unitarily equivalent to truncated Toeplitz operators having continuous symbols.  To our knowledge, this is the
	first \emph{negative} evidence relevant to the TTO-CSO Problem which has been obtained.

	\begin{Corollary}\label{CorA}
		If $A$ is a noncompact operator on a Hilbert space $\h$, then the operator
		$T:\h\oplus \h \to \h \oplus \h$ defined by
		\begin{equation*}
			T = 
			\minimatrix{0}{A}{0}{0}
		\end{equation*}
		is a complex symmetric operator which is not unitarily equivalent to a truncated 
		Toeplitz operator with continuous symbol.
	\end{Corollary}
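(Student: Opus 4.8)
The plan is to establish the two assertions separately. That $T$ is complex symmetric is standard; I would justify it via polar decomposition. The subspace $\ker A^*\oplus\ker A$ reduces $T$, and $T$ vanishes on it, so it suffices to treat $T$ on the complementary reducing subspace $\overline{\ran A}\oplus\overline{\ran A^*}$, where $T$ has the form $\minimatrix{0}{A_0}{0}{0}$ with $A_0$ injective and of dense range. Then the polar factor $U_0$ in $A_0=U_0|A_0|$ is unitary, so this part of $T$ is unitarily equivalent to $\minimatrix{0}{|A_0|}{0}{0}$, which is complex symmetric with respect to the conjugation $\minimatrix{0}{C}{C}{0}$, where $C$ is any conjugation on $\overline{\ran A^*}$ satisfying $C|A_0|C=|A_0|$ (such a $C$ exists because $|A_0|$ is self‑adjoint). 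Since a direct sum of complex symmetric operators is again complex symmetric, $T$ is complex symmetric.

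For the second assertion I would argue by contradiction: suppose $W$ is a unitary with $W^*A_\phi^u W=T$ for some inner function $u$ and some $\phi\in C(\T)$. By Theorem~\ref{TheoremMainContinuous}(iv) we have $A_\phi^u\in C^*(A_z^u)$, and by Theorem~\ref{TheoremMainContinuous}(ii) the quotient $C^*(A_z^u)/\mathscr{K}$ is commutative; hence the images of $A_\phi^u$ and $(A_\phi^u)^*$ in the Calkin algebra commute, i.e.\ $A_\phi^u$ is essentially normal. (Equivalently, one may invoke Theorem~\ref{TheoremMainContinuous}(vii): writing $A_\phi^u=N+K$ with $N$ normal and $K$ compact forces $[(A_\phi^u)^*,A_\phi^u]$ to be compact.) Conjugating by $W$ shows that $[T^*,T]$ is compact.

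On the other hand,
\[
	[T^*,T]\;=\;T^*T-TT^*\;=\;\minimatrix{-AA^*}{0}{0}{A^*A},
\]
and restricting this block‑diagonal operator to the reducing summand $0\oplus\h$ shows that $A^*A$ is compact on $\h$. By the continuous functional calculus $|A|=(A^*A)^{1/2}$ is then compact, and hence so is $A=U|A|$, a product of the bounded operator $U$ with a compact operator. This contradicts the assumption that $A$ is noncompact, completing the proof.

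I do not anticipate a genuine obstacle once Theorem~\ref{TheoremMainContinuous} is in hand: the entire weight of the non‑equivalence rests on the fact, immediate from parts (ii), (iv) and (vii) of that theorem, that every truncated Toeplitz operator with continuous symbol is essentially normal, together with the elementary observations that essential normality is a unitary invariant and that $[T^*,T]$ is compact precisely when $A$ is compact. The only slightly delicate point is the (well‑known) complex symmetry of $\minimatrix{0}{A}{0}{0}$, which the polar‑decomposition argument above settles.
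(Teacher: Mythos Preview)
Your argument is correct, and for the non-equivalence part it is essentially the paper's proof with the details spelled out: the paper simply asserts that $[T,T^*]$ is noncompact (since $A$ is), concludes that $T$ is not of the form normal plus compact, and invokes part (vii) of Theorem~\ref{TheoremMainContinuous}. Your computation of $[T^*,T]$ and the passage through $|A|$ make explicit exactly what the paper leaves to the reader.

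The only genuine difference is in establishing complex symmetry. The paper cites the known fact that every operator nilpotent of order two is complex symmetric \cite[Thm.~2]{SNCSO}, whereas you give a self-contained argument via polar decomposition, reducing to the case of $\minimatrix{0}{|A_0|}{0}{0}$ and exhibiting an explicit conjugation. Your route has the advantage of being self-contained (modulo the standard fact that every self-adjoint operator admits a commuting conjugation), while the paper's one-line citation is shorter but relies on an external reference. Either way the conclusion is the same, and the essential content of the corollary---that the obstruction is the noncompactness of the self-commutator---is identical in both proofs.
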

	
	\begin{proof}
		Since $T$ is nilpotent of degree two, it is complex symmetric by \cite[Thm.~2]{SNCSO}.
		However, $T$ is not of the form normal plus compact since $[T,T^*]$ is noncompact.
		Thus $T$ cannot belong to $C^*(A_z^u)$ for any $u$ by (vii) of Theorem \ref{TheoremMainContinuous}.
	\end{proof}

	\begin{Corollary}\label{CorB}
		If $S$ denotes the unilateral shift, then $T = \bigoplus_{i=1}^{\infty} (S \oplus S^*)$ is a complex symmetric operator
		which is not unitarily equivalent to a truncated Toeplitz operator with continuous symbol.
	\end{Corollary}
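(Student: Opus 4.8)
The plan is to mimic the proof of Corollary \ref{CorA}. First I would exhibit a conjugation making $T$ complex symmetric, and then show that $T$ is \emph{not} of the form normal plus compact. Since being normal plus compact is preserved under unitary equivalence, and since every $A_{\phi}^u$ with $\phi \in C(\T)$ lies in $C^*(A_z^u)$ by item (iv) of Theorem \ref{TheoremMainContinuous} and is therefore normal plus compact by item (vii), it follows that $T$ is unitarily equivalent to no such $A_{\phi}^u$.

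For the first point, the pairing of $S$ with $S^*$ is essential, since the unilateral shift $S$ by itself is not complex symmetric (its kernel is trivial while that of $S^*$ is not). Let $C$ be the canonical conjugation $Cf = \overline{f(\bar z)}$ on $H^2$, which conjugates the Taylor coefficients of $f$ and satisfies $CSC = S$. Define $J$ on $H^2 \oplus H^2$ by $J(f \oplus g) = (Cg) \oplus (Cf)$. A routine check shows that $J$ is a conjugation and that $J(S \oplus S^*)^* J = J(S^* \oplus S) J = S \oplus S^*$, so $S \oplus S^*$ is complex symmetric. Forming the direct sum $\bigoplus_{i=1}^{\infty} J$, which is again a conjugation, we conclude that $T = \bigoplus_{i=1}^{\infty}(S \oplus S^*)$ is complex symmetric.

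For the second point, let $P = I - SS^*$ be the rank-one projection of $H^2$ onto the constants. A direct computation gives $(S \oplus S^*)(S \oplus S^*)^* - (S \oplus S^*)^*(S \oplus S^*) = (SS^* - I) \oplus (I - SS^*) = (-P) \oplus P$, whence $[T, T^*] = \bigoplus_{i=1}^{\infty}\bigl((-P) \oplus P\bigr)$. This self-adjoint operator has $\pm 1$ as eigenvalues of infinite multiplicity and is therefore not compact. If we could write $T = N + K$ with $N$ normal and $K$ compact, then $[T, T^*] = [N, N^*] + \bigl([N, K^*] + [K, N^*] + [K, K^*]\bigr)$; the first term vanishes and the remaining three are compact, so $[T, T^*]$ would be compact, a contradiction. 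Hence $T$ is not normal plus compact, and the argument is complete.

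There is no serious obstacle once Theorem \ref{TheoremMainContinuous} is available; the only step requiring a moment's thought is checking that $S \oplus S^*$ is complex symmetric, for which the component-swapping conjugation $J$ above suffices. I would also remark that the passage to an infinite direct sum is what makes the example work: a single copy of $S \oplus S^*$ has finite-rank self-commutator, so Theorem \ref{TheoremMainContinuous}(vii) by itself does not exclude it.
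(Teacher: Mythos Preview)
Your proof is correct and follows essentially the same route as the paper's: establish that $S\oplus S^*$ (hence $T$) is complex symmetric, observe that $[T,T^*]$ is noncompact, and invoke item (vii) of Theorem~\ref{TheoremMainContinuous}. The only difference is that the paper cites \cite[Ex.~5]{G-P-II} for the complex symmetry of $S\oplus S^*$ and leaves the ``noncompact self-commutator $\Rightarrow$ not normal plus compact'' step implicit, whereas you spell both out explicitly; your closing remark about why the infinite direct sum is needed is a nice addition.
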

	
	\begin{proof}
		First note that the operator $S \oplus S^*$ is complex symmetric by \cite[Ex.~5]{G-P-II} whence $T$
		itself is complex symmetric.  Since $[S,S^*]$ has rank one, it follows that $[T,T^*]$ is noncompact.  Therefore
		$T$ is not of the form normal plus compact whence $T$ cannot belong to $C^*(A_z^u)$ for any $u$  
		by (vii) of Theorem \ref{TheoremMainContinuous}.
	\end{proof}

	Unfortunately, the preceding corollary sheds no light on the following apparently simple problem.
	
	\begin{Question}
		Is $S \oplus S^*$ unitarily equivalent to a truncated Toeplitz operator?  If so, can the symbol
		be chosen to be continuous?
	\end{Question}

\section{Proof of Theorem \ref{TheoremMainContinuous}}\label{SectionProof}

	To prove Theorem \ref{TheoremMainContinuous}, we first require a few preliminary lemmas.
	The first lemma is well-known and we refer the reader to \cite[p.~65]{N1} or \cite[p.~84]{CR} for its proof.
	
	\begin{Lemma}\label{LemmaContinuation}
		Each function in $\K_u$ can be analytically continued across $\T\!\setminus\!\sigma(u)$.
	\end{Lemma}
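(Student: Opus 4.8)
The plan is to use the fact that membership in $\K_u$ admits a pointwise characterization in terms of the reproducing kernels of $\K_u$ away from the spectrum of $u$. First I would recall that $\K_u$ is a reproducing kernel Hilbert space with kernel
\begin{equation*}
	k_\lambda^u(z) = \frac{1 - \overline{u(\lambda)}\,u(z)}{1 - \overline{\lambda}z}
\end{equation*}
for each $\lambda$ in $\D$. For $\lambda_0$ in $\T \setminus \sigma(u)$, the inner function $u$ extends analytically across a neighborhood of $\lambda_0$ (since $\liminf_{z \to \lambda_0}|u(z)| > 0$ forces, together with the fact that $u$ is inner, an analytic continuation by the reflection principle through that arc, $|u|=1$ on $\T$ locally). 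Consequently the formula above makes sense and is analytic in $\lambda$ in a full neighborhood $V$ of $\lambda_0$ in $\C$, and the map $\lambda \mapsto k_\lambda^u$ is a (conjugate-)analytic $\K_u$-valued function on $V$.

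The key step is then: for $f \in \K_u$, define $\widetilde{f}(\lambda) := \langle f, k_{\overline{\lambda}}^{u}\rangle$ (with the appropriate conjugation so that this agrees with $f$ on the disk) for $\lambda$ in the reflected neighborhood. Because $\lambda \mapsto k_{\lambda}^{u}$ is analytic as a vector-valued map on $V$ and the inner product is continuous, $\widetilde f$ is analytic on $V \cup \D$, and it agrees with $f$ on $\D$ by the reproducing property. This exhibits the desired analytic continuation of $f$ across the arc of $\T \setminus \sigma(u)$ near $\lambda_0$; since $\lambda_0$ was arbitrary, $f$ continues analytically across all of $\T \setminus \sigma(u)$. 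To make the vector-valued analyticity rigorous I would check it weakly: for each fixed $g \in \K_u$, $\lambda \mapsto \langle k_\lambda^u, g\rangle = \overline{g(\lambda)}$ is conjugate-analytic where $g$ itself continues, but this is circular, so instead I would verify norm-analyticity directly from the explicit formula for $k_\lambda^u$, differentiating in $\lambda$ and bounding the difference quotients uniformly on compact subsets of $V$ using that $u$ and $1/(1-\overline\lambda z)$ are jointly nice there.

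The main obstacle is the bookkeeping around the antiholomorphic dependence of $k_\lambda^u$ on $\lambda$ and getting the reflection set up so that the continuation is genuinely holomorphic (not merely real-analytic) across $\T$; this is where one uses that both $u$ and the Cauchy-type denominator extend holomorphically past the arc, so that after the reflection $\lambda \mapsto 1/\overline{\lambda}$ the relevant object becomes a bona fide holomorphic function of the new variable. Since this lemma is standard, I would simply cite \cite[p.~65]{N1} or \cite[p.~84]{CR} rather than belabor these points, and record only the statement for use in the sequel.
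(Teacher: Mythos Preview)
The paper does not prove this lemma at all; it simply states that the result is well-known and refers the reader to \cite[p.~65]{N1} or \cite[p.~84]{CR}. Your proposal ultimately lands in exactly the same place, citing the very same two references, so in that sense it matches the paper's ``proof'' precisely.

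Your accompanying sketch goes beyond what the paper offers. The idea of extending $f$ via $\lambda\mapsto\langle f,k_\lambda^u\rangle$ once $u$ (and hence $k_\lambda^u$) continues analytically across the arc is the right one, and you correctly flag the two genuine sticking points: the circularity in checking weak analyticity against arbitrary $g\in\K_u$, and the need to handle the conjugate-analytic dependence on $\lambda$ so that the extension is holomorphic rather than merely real-analytic. Since you explicitly opt to cite the references rather than resolve these, there is nothing to correct; just be aware that a self-contained argument would need the norm-continuity/analyticity of $\lambda\mapsto k_\lambda^u$ established directly from the formula, which is straightforward on compacta in $\T\setminus\sigma(u)$ once one knows $u$ extends there.
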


	The following description of the spectrum and essential spectrum of the compressed
	shift can be found in \cite[Lem.~2.5]{Sarason}, although portions of it date back to the
	work of Liv\v{s}ic and Moeller \cite[Lec.~III.1]{N1}.  The essential spectrum of $A_z^u$
	was computed in \cite[Cor.~5.1]{AC70a}.

	\begin{Lemma}\label{LemmaSpectrum}
		$\sigma(A^{u}_{z}) = \sigma(u)$ and $\sigma_{e}(A^{u}_{z}) = \sigma(u) \cap \T$. 
	\end{Lemma}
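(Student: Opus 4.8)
The plan is to establish the two spectral identities by separately proving the inclusions, using the analytic-continuation property from Lemma \ref{LemmaContinuation} as the main tool for eigenvalue computations and a finite-rank perturbation argument for the essential spectrum. First I would record the well-known defect operator identity $I - (A_z^u)^* A_z^u = k_0 \otimes k_0$ and $I - A_z^u (A_z^u)^* = \widetilde{k_0} \otimes \widetilde{k_0}$, where $k_0(z) = \frac{1-\overline{u(0)}u(z)}{1-0}$ is the reproducing kernel at the origin and $\widetilde{k_0}$ its conjugate; both defect operators have rank one, so $A_z^u$ is a contraction that is ``almost unitary'' in the sense that $(A_z^u)^*A_z^u$ and $A_z^u(A_z^u)^*$ are rank-one perturbations of the identity.

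For the spectrum, I would argue as follows. If $\lambda \in \D$ and $u(\lambda) = 0$, then the reproducing kernel $k_\lambda^u \in \K_u$ satisfies $(A_z^u)^* k_\lambda^u = \overline{\lambda}\, k_\lambda^u$, so $\overline{\lambda} \in \sigma_p((A_z^u)^*)$ and hence $\lambda \in \sigma(A_z^u)$; since the spectrum is closed this gives $\sigma(u) \cap \D \subseteq \sigma(A_z^u)$, and combined with the boundary part (handled below) yields $\sigma(u) \subseteq \sigma(A_z^u)$. Conversely, if $\lambda \notin \sigma(u)$, then either $|\lambda| > 1$ (trivially in the resolvent since $\|A_z^u\| \le 1$) or $\lambda \in \overline{\D}$ with $\liminf_{z\to\lambda}|u(z)| > 0$; in the latter case $u$ is bounded away from zero near $\lambda$, so $1/u$ extends analytically past $\lambda$, and one can write down an explicit inverse for $A_z^u - \lambda$ — for instance, using $A_z^u = T_z|_{\K_u}$ composed with $P_u$ and the fact that multiplication by $(z-\lambda)^{-1}$ maps $\K_u$ boundedly into itself (this is where Lemma \ref{LemmaContinuation} enters, since functions in $\K_u$ continue across a neighborhood of $\lambda$ on $\T$). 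This shows $\sigma(A_z^u) \subseteq \sigma(u)$.

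For the essential spectrum, the inclusion $\sigma_e(A_z^u) \subseteq \sigma_e(u) \cap \T = \sigma(u) \cap \T$ follows from the previous paragraph together with the observation that every $\lambda \in \sigma(u) \cap \D$ is an isolated point or, more precisely, that $A_z^u - \lambda$ is Fredholm there: when $u(\lambda) = 0$ with multiplicity $m$, one factors $u = B_\lambda^m v$ and checks that $A_z^u - \lambda$ has finite-dimensional kernel and cokernel via the corresponding decomposition of $\K_u$, so interior points of $\sigma(u)$ are not in $\sigma_e$. For the reverse inclusion $\sigma(u) \cap \T \subseteq \sigma_e(A_z^u)$, suppose $\zeta \in \T$ with $\liminf_{z\to\zeta}|u(z)| = 0$; I would produce a singular sequence, i.e. a sequence of unit vectors $f_n \in \K_u$ with $f_n \to 0$ weakly and $\|(A_z^u - \zeta) f_n\| \to 0$. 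Natural candidates are normalized reproducing kernels $k_{\lambda_n}^u / \|k_{\lambda_n}^u\|$ for points $\lambda_n \to \zeta$ along which $|u(\lambda_n)| \to 0$: one checks that $\|k_{\lambda_n}^u\| \to \infty$ (forcing weak-null), and that $\|((A_z^u)^* - \overline{\zeta}) k_{\lambda_n}^u\| / \|k_{\lambda_n}^u\| \to 0$ using the eigenvalue-like relation $(A_z^u)^* k_\lambda^u = \overline{\lambda} k_\lambda^u - \overline{u(\lambda)}\, (\text{bounded correction})$.

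The main obstacle I anticipate is the construction and estimate of this singular sequence at boundary points $\zeta$ where $u$ does \emph{not} have an actual zero approaching $\zeta$ (e.g. $\zeta \in \operatorname{supp}\mu$ for a singular inner function with no atom at $\zeta$): here one cannot use exact reproducing-kernel eigenvectors and must instead use approximate ones, controlling $\|k_{\lambda_n}^u\|^2 = \frac{1-|u(\lambda_n)|^2}{1-|\lambda_n|^2}$ and the residual term simultaneously, which requires choosing $\lambda_n \to \zeta$ nontangentially along a path where $|u(\lambda_n)| \to 0$ fast enough relative to $1 - |\lambda_n|$. Alternatively — and this may be cleaner — one can cite the fact that $A_z^u - \zeta$ fails to be Fredholm because its range is not closed, which follows from Lemma \ref{LemmaContinuation} failing precisely on $\sigma(u) \cap \T$; but making ``range not closed'' precise still amounts to the same singular-sequence estimate. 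Since the statement is attributed to \cite[Lem.~2.5]{Sarason} and \cite[Cor.~5.1]{AC70a}, in the actual write-up it would suffice to cite these, but the above is the route I would take to prove it from scratch.
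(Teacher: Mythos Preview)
The paper does not prove this lemma; it merely records the statement and cites \cite[Lem.~2.5]{Sarason} for the spectrum and \cite[Cor.~5.1]{AC70a} for the essential spectrum. Your sketch is therefore not competing with anything in the paper, and you already note at the end that citation would suffice.

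Two remarks on your route, since you asked where the difficulties lie. First, the inclusion $\sigma_e(A_z^u)\subseteq\sigma(u)\cap\T$ comes for free from the rank-one defect identities you record at the outset: they make the image of $A_z^u$ in the Calkin algebra unitary, so $\sigma_e(A_z^u)\subseteq\T$, and combined with $\sigma_e(A_z^u)\subseteq\sigma(A_z^u)=\sigma(u)$ this gives the inclusion without the Fredholm-at-interior-points detour. Second, the ``main obstacle'' you anticipate for the reverse inclusion is not an obstacle. A direct computation (using $(A_z^u)^*=S^*|_{\K_u}$) gives, for \emph{every} $\lambda\in\D$,
\[
\big((A_z^u)^*-\overline{\lambda}\big)k_\lambda^u \;=\; -\,\overline{u(\lambda)}\,\frac{u-u(0)}{z},
\]
and $\|(u-u(0))/z\|^2=1-|u(0)|^2\le 1$. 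Hence
\[
\frac{\big\|((A_z^u)^*-\overline{\lambda})k_\lambda^u\big\|}{\|k_\lambda^u\|}
\;\le\; |u(\lambda)|\sqrt{\frac{1-|\lambda|^2}{1-|u(\lambda)|^2}}\;\longrightarrow\;0
\]
along \emph{any} sequence $\lambda_n\to\zeta$ with $u(\lambda_n)\to 0$, with no need to balance rates; the argument is the same whether $\zeta$ is approached by zeros of a Blaschke factor or lies in the support of the singular measure. This normalized-kernel device is exactly what the paper itself deploys later in the proof of Lemma~\ref{LemmaContinuousCompact}.
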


	Although the following must certainly be well-known among specialists, we do not recall 
	having seen its proof before in print.  We therefore provide a short proof of this important fact.

	\begin{Lemma}\label{LemmaIrreducible}
		$A^{u}_{z}$ is irreducible. 
	\end{Lemma}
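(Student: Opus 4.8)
The plan is to leverage two standard features of the compressed shift: first, that the self-adjoint operator $I - A_z^u(A_z^u)^*$ has rank one, and second, that a vector spanning its range is cyclic for $A_z^u$. Granting these, irreducibility follows quickly. Suppose $M$ is a closed subspace reducing $A_z^u$; then $M$ reduces every word in $A_z^u$ and $(A_z^u)^*$, in particular the rank-one self-adjoint operator $I - A_z^u(A_z^u)^*$, and so $M$ must either contain or be orthogonal to its one-dimensional range. In the first case, $M$ is a closed $A_z^u$-invariant subspace containing a cyclic vector, hence $M = \K_u$; in the second case the same reasoning applied to $M^\perp$ (which also reduces $A_z^u$) gives $M^\perp = \K_u$, i.e. $M = \{0\}$. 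Either way $M$ is trivial.

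So the substance lies in the two claims. For the first, I would use that $\K_u$ is invariant under $T_z^*$ and that $(A_z^u)^*$ is simply the restriction of $T_z^*$ to $\K_u$; hence $(A_z^u)^*f = (f - f(0))/z$ for $f \in \K_u$, and therefore $A_z^u(A_z^u)^*f = P_u(f - f(0)) = f - f(0)\,k_0$, where $k_0 := P_u 1 = 1 - \overline{u(0)}\,u$ is the reproducing kernel of $\K_u$ at the origin. Thus $(I - A_z^u(A_z^u)^*)f = f(0)\,k_0 = \inner{f, k_0}\,k_0$, a rank-one operator (with $k_0 \neq 0$, since $k_0(0) = 1 - |u(0)|^2 > 0$ because $|u(0)| < 1$ for a nonconstant inner function). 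For the second, an easy induction --- discarding at each step a summand lying in $uH^2$, which $P_u$ annihilates --- shows $(A_z^u)^n g = P_u(z^n g)$ for all $g \in \K_u$ and $n \geq 0$; applying this to $g = k_0$ and using $P_u(z^n u) = 0$ gives $(A_z^u)^n k_0 = P_u(z^n)$. Since any $g \in \K_u$ orthogonal to every $P_u(z^n)$ satisfies $\inner{g, z^n} = 0$ for all $n \geq 0$ and is therefore zero, the span of $\{(A_z^u)^n k_0 : n \geq 0\}$ is dense in $\K_u$, so $k_0$ is cyclic for $A_z^u$.

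I do not expect a real obstacle here: the argument is short and essentially mechanical once the two computations are in place. The only points deserving explicit mention are the elementary identity $(A_z^u)^n g = P_u(z^n g)$ and the fact --- used tacitly elsewhere in the paper --- that $(A_z^u)^*$ is the restriction of $T_z^*$ to $\K_u$. One could equally well run the same argument with the other defect operator $I - (A_z^u)^*A_z^u$, whose range is spanned by the conjugate kernel $(u - u(0))/z$ and which is cyclic for $(A_z^u)^*$; the version above seems slightly cleaner.
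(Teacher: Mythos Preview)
Your proof is correct and follows essentially the same route as the paper's: both use that $I - A_z^u(A_z^u)^* = k_0 \otimes k_0$ has rank one and that $k_0$ is cyclic for $A_z^u$ to conclude that any reducing subspace is trivial. The paper's version is terser, citing \cite[Lem.~2.3--2.4]{Sarason} for the two key facts rather than verifying them directly, and it starts from a nonzero reducing subspace (so only the ``$k_0 \in \M$'' branch needs to be argued); your version supplies the computations and makes the case split explicit.
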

	
	\begin{proof}
		Let $\M$ be a nonzero reducing subspace of $\K_u$ for the operator $A_z^u$.  
%		Since $\M$ is invariant under conjugate-analytic
%		Toeplitz operators and contains a function which is not identically zero, it follows that
%		$\M$ contains an outer function $f$.  
		In light of the fact that $\M$ is invariant 
		under the operator $I - A_z^u (A_z^u)^* = k_0 \otimes k_0$ 
		\cite[Lem.~2.4]{Sarason}, it follows that the nonzero vector 
		$k_0$ belongs to $\M$.  Since $k_0$ is a cyclic
		vector for $A_z^u$ \cite[Lem.~2.3]{Sarason}, we conclude that $\M = \K_u$.
	\end{proof}

	\begin{Lemma}\label{LemmaContinuousCompact}
		If $\phi \in C(\T)$, then $A^{u}_{\phi}$ is compact if and only if $\phi|_{\sigma(u) \cap \T} \equiv 0$. 
	\end{Lemma}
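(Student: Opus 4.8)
The plan is to prove the two implications separately, exploiting the analytic continuation property of model spaces (Lemma \ref{LemmaContinuation}) for the harder direction. For the easy direction, suppose $\phi \in C(\T)$ vanishes on $\sigma(u) \cap \T$. I would first approximate $\phi$ uniformly on $\T$ by functions $\psi_n \in C(\T)$ each of which vanishes on an \emph{open} neighborhood of the closed set $\sigma(u) \cap \T$; since $\phi$ is continuous and vanishes on that closed set, such a sequence exists (e.g. compose $\phi$ with a sequence of continuous ``dead-zone'' maps $\R^{+}\to\R^{+}$ flattening a shrinking interval near $0$, or use a Tietze/Urysohn argument). Because $\phi \mapsto A_\phi^u$ is norm-contractive, $\|A_\phi^u - A_{\psi_n}^u\| = \|A_{\phi - \psi_n}^u\| \le \|\phi - \psi_n\|_\infty \to 0$, so it suffices to show each $A_{\psi_n}^u$ is compact, and then invoke the fact that the compacts are norm closed. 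For a fixed such $\psi = \psi_n$, the support of $\psi$ is a compact subset of $\T \setminus \sigma(u)$, across which every function in $\K_u$ continues analytically. The key computation is then that for $f, g \in \K_u$ one has $\inner{A_\psi^u f, g} = \inner{\psi f, g}_{L^2} = \int_{\T} \psi f \bar g \, dm$, and on $\operatorname{supp}\psi$ both $f$ and $g$ agree with functions analytic in a neighborhood; I expect this to force $A_\psi^u$ to be (for instance) Hilbert--Schmidt, by exhibiting a square-summable matrix representation in a suitable orthonormal basis of $\K_u$ built from the reproducing kernels, or more robustly by factoring $A_\psi^u$ through multiplication by a function supported off $\sigma(u)$ together with a compact embedding of $\K_u$ into a space of functions analytic near $\operatorname{supp}\psi$.

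For the converse, suppose $A_\phi^u$ is compact; I want to show $\phi \equiv 0$ on $\sigma(u) \cap \T$. The cleanest route is via the essential spectrum: by Lemma \ref{LemmaSpectrum}, $\sigma_e(A_z^u) = \sigma(u) \cap \T$, so for each $\zeta \in \sigma(u) \cap \T$ there is (by Weyl's criterion) a sequence of unit vectors $f_n \in \K_u$ with $f_n \to 0$ weakly and $\|(A_z^u - \zeta)f_n\| \to 0$. One then checks that $\|(A_\phi^u - \phi(\zeta))f_n\| \to 0$ as well: writing $\phi(z) - \phi(\zeta)$ as a continuous function vanishing at $\zeta$, approximate it uniformly by polynomials in $z$ and $\bar z$ times $(z - \zeta)$-type factors, or more directly use that $A_\phi^u - A_{\phi(\zeta)}^u = A_{\phi - \phi(\zeta)}^u$ and that the map $\psi \mapsto A_\psi^u$ is a contractive linear map which is ``multiplicative modulo lower-order terms'' when tested against such Weyl sequences (this is essentially the content of part (v) of the main theorem, but here only the single inclusion $\phi(\sigma_e(A_z^u)) \subseteq \sigma_e(A_\phi^u)$ is needed and it can be obtained directly). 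Since $f_n \to 0$ weakly and $A_\phi^u$ is compact, $\|A_\phi^u f_n\| \to 0$, whence $|\phi(\zeta)| = \lim \|\phi(\zeta) f_n\| = \lim\|A_\phi^u f_n - (A_\phi^u - \phi(\zeta))f_n\| = 0$. As $\zeta \in \sigma(u)\cap\T$ was arbitrary, $\phi|_{\sigma(u)\cap\T} \equiv 0$.

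The main obstacle I anticipate is the compactness assertion in the forward direction: turning ``$f \in \K_u$ continues analytically across $\operatorname{supp}\psi$'' into a genuine compactness (ideally Hilbert--Schmidt) bound on $A_\psi^u$. The subtlety is that analytic continuation alone is a pointwise statement; one needs \emph{uniform} control, i.e. that the map $f \mapsto f|_{\operatorname{supp}\psi}$, from $\K_u$ with its $L^2(\T)$ norm into $L^2$ of a slightly larger arc (or a space of analytic functions with sup norm on a compact set strictly inside the continuation region), is bounded and in fact compact. I expect this to follow from a normal-families / Montel argument: a bounded sequence in $\K_u$ has, by the reproducing-kernel bound on $\T \setminus \sigma(u)$, a subsequence converging uniformly on compact subsets of the continuation region, which yields compactness of the restriction map; composing with multiplication by $\psi$ and the bounded inclusion back into $L^2(\T)$ then exhibits $A_\psi^u$ as a norm limit of finite-rank operators. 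An alternative, avoiding continuation entirely, is to note that $\psi$ lies in the closed ideal generated (in $C(\T)$) by functions of the form $\bar u h$ with $h \in H^\infty$ vanishing suitably — but I believe the continuation approach, though requiring the uniformity lemma, is the most transparent and is the one I would write up.
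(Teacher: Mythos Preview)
Your $(\Leftarrow)$ argument follows the paper's strategy exactly: approximate $\phi$ uniformly by $\psi\in C(\T)$ vanishing on a neighborhood of $\sigma(u)\cap\T$ and then show $A_\psi^u$ is compact. The paper's execution of this last step is more direct than your Montel/Hilbert--Schmidt proposals and resolves precisely the ``uniform control'' issue you flag: if $f_n\to 0$ weakly in $\K_u$, then on $K:=\operatorname{supp}\psi\subset\T\setminus\sigma(u)$ one has $f_n(\zeta)=\langle f_n,k_\zeta\rangle\to 0$ pointwise, together with the uniform bound $|f_n(\zeta)|\le \|f_n\|\,\|k_\zeta\|=\|f_n\|\,|u'(\zeta)|^{1/2}\le C$ (since $u$ is analytic on a neighborhood of $K$). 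Dominated convergence then gives $\|\psi f_n\|_{L^2}\to 0$, hence $\|A_\psi^u f_n\|\to 0$. No normal-families argument is needed.

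Your $(\Rightarrow)$ argument is a genuinely different route. The paper instead tests $A_\phi^u$ against normalized reproducing kernels: choosing $\lambda_n\to\xi$ in $\D$ with $u(\lambda_n)\to 0$, the unit vectors $\hat k_{\lambda_n}:=k_{\lambda_n}/\|k_{\lambda_n}\|$ tend weakly to $0$, and $\langle A_\phi^u\hat k_{\lambda_n},\hat k_{\lambda_n}\rangle=\int_\T \phi\,|\hat k_{\lambda_n}|^2\,dm\to\phi(\xi)$ by an approximate-identity computation; compactness forces the left side to $0$, so $\phi(\xi)=0$.

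Your Weyl-sequence approach can be made to work, but the key step ``$\|(A_\phi^u-\phi(\zeta))f_n\|\to 0$'' is not justified as written. Approximating $\phi$ uniformly by trigonometric polynomials reduces the claim to showing $((A_z^u)^k-\zeta^k)f_n\to 0$ and $(((A_z^u)^*)^k-\bar\zeta^{\,k})f_n\to 0$ for $k\ge 1$. The former follows from $(A_z^u-\zeta)f_n\to 0$, but the latter requires $((A_z^u)^*-\bar\zeta)f_n\to 0$, which is \emph{not} automatic from the Weyl condition. What rescues the argument is the essential normality of $A_z^u$: since $[A_z^u,(A_z^u)^*]$ has finite rank, one gets $\|((A_z^u)^*-\bar\zeta)f_n\|^2=\|(A_z^u-\zeta)f_n\|^2+\langle[A_z^u,(A_z^u)^*]f_n,f_n\rangle\to 0$. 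You should make this explicit. Note also that your alternative suggestion---factor $\phi-\phi(\zeta)$ through $(z-\zeta)$---runs into the problem that relating $A_{(z-\zeta)g}^u$ to $A_g^u A_{z-\zeta}^u$ uses the semicommutator compactness of Lemma~\ref{LemmaContinuousCommutators}, which is proved \emph{after} the present lemma, so that route risks circularity.
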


	\begin{proof}
		$(\Leftarrow)$ Suppose that $\phi|_{\sigma(u) \cap \T} \equiv 0$. Let $\epsilon > 0$ and pick 
		$\psi$ in $C(\T)$ such that $\psi$ vanishes on an open set containing $\sigma(u) \cap \T$ 
		and $\|\phi - \psi\|_{\infty} < \epsilon$.  Since $\|A^{u}_{\phi} - A^{u}_{\psi}\| \leq \|\phi - \psi\|_{\infty} < \epsilon$,
		it suffices to show that $A^{u}_{\psi}$ is compact.  To this end, we prove that if $f_{n}$ is a sequence in
		$\K_u$ which tends weakly to zero, then $A^{u}_{\psi} f_n \to 0$ in norm.
	
		Let $K$ denote the closure of $\psi^{-1}(\C\backslash\{0\})$ 
		and note that $K \subset \T \setminus\! \sigma(u)$.
		By Lemma \ref{LemmaContinuation}, we know that each $f_n$ has an 
		analytic continuation across $K$ from which it follows that $f_n(\zeta) = \inner{f_n,k_{\zeta}}\to 0$,
		where 
		$$k_{\zeta}(z) = \frac{1-\overline{u(\zeta)}u(z)}{1 - \overline{\zeta}z}$$ denotes the reproducing kernel
		corresponding to a point $\zeta$ in $K$ \cite[p.~495]{Sarason}.  Since $u$ is analytic
		on a neighborhood of the compact set $K$ we obtain
		\begin{equation*}
			|f_{n}(\zeta)|  = |\inner{ f_n, k_{\zeta}}| \leq \|f_{n}\| |u'(\zeta)|^{\frac{1}{2}} 
			\leq \sup_{n} \|f_{n}\| \sup_{\zeta \in K} |u'(\zeta)|^{\frac{1}{2}} = C < \infty
		\end{equation*}
		for each $\zeta$ in $K$.  By the dominated convergence theorem, it follows that
		\begin{equation*}
			\|A^{u}_{\psi} f_n\|^2 = \|P_{u}(\psi f_n)\| \leq \|\psi f_n\|^2 = \int_{K} |\psi|^2 |f_n|^2 \to 0
		\end{equation*}
		whence $A^{u}_{\psi} f_n$ tends to zero in norm, as desired.\medskip
	
		\noindent $(\Rightarrow)$ Suppose that $\phi$ belongs to $C(\T)$, $\xi$ belongs to $\sigma(u) \cap \T$,
		and $A^{u}_{\phi}$ is compact.  	
		Let
		$$F_{\lambda}(z) := \frac{1 - |\lambda|^2}{1 - |u(\lambda)|^2} \left|\frac{1 - \overline{u(\lambda)} u(z)}{1 - \overline{\lambda} z}\right|^2,$$
		which is the absolute value of the normalized reproducing kernel for $\mathcal{K}_{u}$.
		Observe that $F_{\lambda}(z) \geq 0$ and 
		\begin{equation*}
			\frac{1}{2 \pi} \int_{-\pi}^{\pi} F_{\lambda}(e^{it})\,dt = 1
		\end{equation*}
		by definition.

		By \eqref{spec-u}  there is sequence $\lambda_{n}$ in $\D$ such that $|u(\lambda_n)| \to 0$. 
		Suppose that $\xi = e^{i\alpha}$ and note that if $|t - \alpha| \geq \delta$, then
		\begin{equation} \label{spect-u2}
			F_{\lambda_n}(e^{it}) \leq C_{\delta} \frac{1-|\lambda_n|^2}{1 - |u(\lambda_n)|^2}\to 0.
		\end{equation}
	 This is enough to make the following approximate identity argument go through. Indeed,
		\begin{align*}
			&\left| \phi(\xi) - \frac{1}{2\pi} \int_{-\pi}^{\pi} \phi(e^{it}) F_{\lambda_n}(e^{it})\,dt \right| \\
			&\qquad \leq \frac{1}{2\pi} \int_{|t-\alpha|\leq \delta} |\phi(\xi) - \phi(e^{it})| F_{\lambda_n} (e^{it}) \,dt\\
			&\qquad \qquad + 
			\frac{1}{2\pi} \int_{|t-\alpha|\geq \delta} |\phi(\xi) - \phi(e^{it})| F_{\lambda_n} (e^{it}) \,dt.
		\end{align*}
		This first integral can be made small by the continuity of $\phi$.  Once $\delta > 0$ is fixed, the second term goes to zero by \eqref{spect-u2}.
		\end{proof}
		
		\begin{Remark}
		We would like to thank the referee for suggesting this elegant normalized kernel function proof of the $(\Rightarrow$) direction of this lemma. Our original argument was somewhat longer. 
		\end{Remark}

	\begin{Lemma}\label{LemmaContinuousCommutators}
		For each $\phi, \psi \in C(\T)$, the semicommutator $A^{u}_{\phi} A^{u}_{\psi} - A^{u}_{\phi \psi}$ is compact.
		In particular, the commutator $[A_{\phi}^u, A_{\psi}^u]$ is compact.
	\end{Lemma}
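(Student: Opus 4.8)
The plan is to reduce the whole lemma to the single fact that $[P_u, M_\phi]$ is compact for every $\phi \in C(\T)$, where $M_\phi$ denotes multiplication by $\phi$ on $L^2$ and $P_u$ is, as above, the orthogonal projection of $L^2$ onto $\K_u$. Granting this, the lemma follows from a short computation: for $f \in \K_u$ one has
\[
\bigl(A^u_\phi A^u_\psi - A^u_{\phi\psi}\bigr)f \;=\; -\,P_u\bigl(\phi\,(I-P_u)(\psi f)\bigr)\;=\;-\,P_u M_\phi (I-P_u) M_\psi f,
\]
and, since $P_u(I-P_u)=0$, we may rewrite $P_u M_\phi(I-P_u) = [P_u, M_\phi](I-P_u)$. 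Thus the semicommutator equals $-[P_u, M_\phi](I-P_u)M_\psi\big|_{\K_u}$, the product of a compact operator with a bounded one, and is therefore compact. The commutator assertion is then immediate, since $A^u_{\phi\psi} = A^u_{\psi\phi}$ gives
\[
[A^u_\phi, A^u_\psi] \;=\; \bigl(A^u_\phi A^u_\psi - A^u_{\phi\psi}\bigr) - \bigl(A^u_\psi A^u_\phi - A^u_{\psi\phi}\bigr),
\]
a difference of two compact operators.

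To establish compactness of $[P_u, M_\phi]$, let $P_+ \colon L^2 \to H^2$ be the Riesz projection, and recall that since $u$ is inner, $M_u$ is a unitary operator on $L^2$ with $M_u^{-1} = M_u^{*} = M_{\bar u}$. From the orthogonal decomposition $H^2 = \K_u \oplus uH^2$ we get $P_+ = P_u + P_{uH^2}$, while $P_{uH^2} = M_u P_+ M_u^{*}$ because $M_u$ is unitary and $M_u H^2 = uH^2$. Hence
\[
[P_u, M_\phi] \;=\; [P_+, M_\phi] - [\,M_u P_+ M_u^{*},\, M_\phi\,].
\]
The key point is that multiplication operators commute with one another, so $M_u^{*} M_\phi = M_\phi M_u^{*}$ and $M_\phi M_u = M_u M_\phi$; moving $M_u$ and $M_u^{*}$ past $M_\phi$ when expanding the second commutator gives $[\,M_u P_+ M_u^{*}, M_\phi\,] = M_u[P_+, M_\phi]M_u^{*}$, and therefore
\[
[P_u, M_\phi] \;=\; [P_+, M_\phi] - M_u\,[P_+, M_\phi]\,M_u^{*}.
\]
It thus suffices to know that $[P_+, M_\phi]$ is compact for $\phi \in C(\T)$, which is classical: for a trigonometric polynomial $\phi$ this commutator has finite rank, and the general case follows by approximating $\phi$ uniformly by trigonometric polynomials (equivalently, $[P_+, M_\phi]$ is a difference of Hankel operators with continuous symbols).

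I expect the only genuinely delicate point to be the appearance of $M_{\bar u}$ ($=M_u^{*}$): because an inner function $u$ need not extend continuously to $\T$, one cannot control commutators such as $[P_+, M_{\bar u}]$, and a careless expansion of $[M_u P_+ M_u^{*}, M_\phi]$ would produce exactly such uncontrollable terms. The resolution is to exploit that $M_u$ is unitary and commutes with $M_\phi$, which converts the troublesome conjugation into a harmless unitary conjugation of the ``good'' commutator $[P_+, M_\phi]$; pleasantly, this route needs no case analysis according to the type of $u$. Finally, the semicommutator identity used in the first paragraph is itself routine, being just the rearrangement $A^u_\phi A^u_\psi f - A^u_{\phi\psi} f = P_u\bigl(\phi\,(P_u-I)(\psi f)\bigr)$ valid for all $f \in \K_u$.
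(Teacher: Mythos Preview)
Your proof is correct and takes a genuinely different route from the paper's. The paper proceeds by a direct computation with monomials: it writes the semicommutator for trigonometric polynomials as a finite linear combination of terms $A^u_{z^i}A^u_{z^j}-A^u_{z^{i+j}}$, reduces the only nontrivial case to showing $A^u_{z^m}A^u_{\bar z^m}-I$ is compact, and obtains this from the rank-one identity $A^u_z A^u_{\bar z}-I = -k_0\otimes k_0$ via the telescoping sum $A^u_{z^m}A^u_{\bar z^m}-I=\sum_{\ell=0}^{m-1}A^u_{z^\ell}(A^u_zA^u_{\bar z}-I)A^u_{\bar z^\ell}$; uniform approximation by Ces\`aro means finishes the job. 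Your argument instead identifies the semicommutator with $-[P_u,M_\phi](I-P_u)M_\psi|_{\K_u}$, writes $P_u=P_+-M_uP_+M_u^{*}$, and uses commutativity of multiplication operators to obtain $[P_u,M_\phi]=[P_+,M_\phi]-M_u[P_+,M_\phi]M_u^{*}$, reducing everything to the classical compactness of $[P_+,M_\phi]$ for continuous $\phi$.

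Your approach is more conceptual and cleanly exhibits the result as a consequence of the standard Hankel-operator fact, with the pleasant feature (which you rightly emphasize) that the unitary conjugation by $M_u$ sidesteps any need to control $[P_+,M_{\bar u}]$. The paper's approach, by contrast, is more self-contained: it uses only Sarason's rank-one lemma and no external Hankel machinery. It is worth noting that the paper's Remark following the lemma records an argument of Trieu Le, via the model-space Hankel operators $H^u_f=(I-P_u)M_f|_{\K_u}$ and the identity $A^u_{\phi\psi}-A^u_\phi A^u_\psi=(H^u_{\bar\phi})^*H^u_\psi$, which is essentially your approach in slightly different clothing; Le's version has the bonus of immediately yielding the stronger statement that only one of $\phi,\psi$ need be continuous.
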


	\begin{proof}
		Let $p(z) = \sum_{i} p_i z^{i}$ and $q(z) = \sum_{j} q_j z^j$ be trigonometric polynomials on $\T$ and note that
		\begin{equation*}
			A^{u}_{p} A^{u}_{q} - A^{u}_{p q} = \sum_{i,j} p_i q_j (A^{u}_{z^i} A^{u}_{z^j} - A^{u}_{z^{i + j}}).
		\end{equation*}
		We claim that the preceding operator is compact.  Since all sums involved are finite, it suffices
		to prove that $A^{u}_{z^i} A^{u}_{z^j} - A^{u}_{z^{i + j}}$ is compact for each pair $(i,j)$ of integers.

		If $i$ and $j$ are of the same sign, then
		$A^{u}_{z^i} A^{u}_{z^j} - A^{u}_{z^{i + j}} = 0$ is trivially compact.
		If $i$ and $j$ are of different signs, then upon relabeling and taking adjoints, if necessary,
		it suffices to show that if  $n \geq m \geq 0$, then the operator 
		$A^{u}_{z^n} A^{u}_{\overline{z}^m} - A^{u}_{z^{n - m}}$ is compact (the case $n \leq m \leq 0$ being similar).  In light of the fact that
		\begin{equation*}
			A^{u}_{z^{n}} A^{u}_{\overline{z}^{m}} - A^{u}_{z^{n - m}} 
			= A^{u}_{z^{n - m}} (A^{u}_{z^m} A^{u}_{\overline{z}^m} - I),
		\end{equation*}
		we need only show that $A^{u}_{z^m} A^{u}_{\overline{z}^m}-I$ is compact for each $m \geq 1$.  However, since
		$A_z^u A_{\overline{z}}^u-I$ has rank one \cite[Lem.~2.4]{Sarason}, this follows immediately from the identity
		\begin{equation*}
			A_{z^m}^u A_{\overline{z}^m}^u  -I= 
			\sum_{\ell=0}^{m-1} A_{z^{\ell}}^u (A_z^u A_{\overline{z}}^u - I) A_{\overline{z}^{\ell}}^u.
		\end{equation*}
	
		Having shown that $A^{u}_{p} A^{u}_{q} - A^{u}_{p q}$ is compact for every pair of
		trigonometric polynomials $p$ and $q$, the desired result follows since we may uniformly approximate
		any given $\phi,\psi$ in $C(\T)$ by their respective Ces\`aro means.
	\end{proof}

	\begin{Remark}
	For Toeplitz operators, it is known that the semicommutator $T_{\phi} T_{\psi} - T_{\phi \psi}$ is compact
	under the assumption that one of the symbols is continuous, while the other belongs to $L^{\infty}$ 
	\cite[Prop.~4.3.1]{MR1865513}, \cite[Cor.~V.1.4]{Davidson}. Though not needed for the proof of our main theorem, the same is true for truncated Toeplitz operators. This was kindly pointed out to us by
	 Trieu Le.  Here is his proof: For $f$ in $L^{\infty}$, define 
		the Hankel operator $H^{u}_{f}: \mathcal{K}_{u} \to L^2$ by $H^{u}_{f} := (I - P_{u}) M_{f}$ and note that  
		$(H^{u}_{f})^{*} = P_{u} M_{\overline{f}} (I - P_u)$. For $\phi, \psi$ in $L^{\infty}$ a computation shows that 
		\begin{equation} \label{Le}	A^{u}_{\phi \psi} - A^{u}_{\phi} A^{u}_{\psi} = (H^{u}_{\overline{\phi}})^{*} H^{u}_{\psi}.
		\end{equation}
		If $f$ belongs to $L^{\infty}$, then setting $\phi = \overline{f}$ and $\phi = f$, we have 
		$$(H^{u}_{f})^{*} H^{u}_{f} = A^{u}_{\overline{f} f} - A^{u}_{\overline{f}} A^{u}_{f}.$$
		For continuous $f$ it follows from the previous Lemma that $(H^{u}_{f})^{*} H^{u}_{f}$ and hence $H^{u}_{f}$ is compact whenever $f$ is continuous. From \eqref{Le} we see that if one of $\phi$ or $\psi$ is continuous then
		$A^{u}_{\phi \psi} - A^{u}_{\phi} A^{u}_{\psi}$ is compact.
	\end{Remark}
	
	%Fortunately, Lemma \ref{LemmaContinuousCommutators} is sufficient for our purposes and
	%we do not require an answer to the preceding question in order to complete the proof of Theorem \ref{TheoremMainContinuous}.
	
	\begin{proof}[Proof of Theorem \ref{TheoremMainContinuous}]
		Before proceeding further, let us remark that statement (iii) has already been proven
		(see Lemma \ref{LemmaContinuousCompact}).  We first claim that 
		\begin{equation}\label{eq-ContinuousAlgebra}
			C^*(A_z^u) = C^*(\{ A_{\phi}^u : \phi \in C(\T)\}),
		\end{equation}
		noting that the containment $\subseteq$ in the preceding holds trivially.  Since
		$(A_z^u)^* = A_{\overline{z}}^u$, it follows that $A_{p}^u$ belongs to 
		$C^*(A_z^u)$ for any trigonometric polynomial $p$.  We may then uniformly approximate any
		given $\phi$ in $C(\T)$ by its Ces\`aro means to see that $A_{\phi}^u$ belongs to $C^*(A_z^u)$.
		This establishes the containment $\supseteq$ in \eqref{eq-ContinuousAlgebra}.
		
		We next prove statement (i) of Theorem \ref{TheoremMainContinuous}, 
		which states that the commutator ideal $\mathscr{C}(C^*(A_z^u))$ of $C^*(A_z^u)$ is precisely
		$\mathscr{K}$, the set of all compact operators on the model space $\K_u$:
		\begin{equation}\label{eq-CompactContainment}
			\mathscr{C}(C^*(A_z^u)) = \mathscr{K}.
		\end{equation}
		The containment $\mathscr{C}(C^*(A_z^u)) \subseteq \mathscr{K}$ follows easily from 
		\eqref{eq-ContinuousAlgebra} and Lemma \ref{LemmaContinuousCommutators}.
		On the other hand, Lemma \ref{LemmaIrreducible} tells us that $A_z^u$ is irreducible, whence
		the algebra $C^*(A_z^u)$ itself is irreducible.  Since $[A_z^u,A_{\overline{z}}^u] \neq 0$ is compact, it follows that
		$C^*(A_z^u) \cap \mathscr{K} \neq \{0\}$.  By \cite[Cor.~3.16.8]{ConwayCOT}, we conclude that 
		$\mathscr{K} \subseteq \mathscr{C}(C^*(A_z^u))$, which establishes \eqref{eq-CompactContainment}.
		
		We now claim that
		\begin{equation}\label{eq-CAZU}
			C^*(A_z^u) = \{A_{\phi}^u + K : \phi \in C(\T), K \in \mathscr{K}\},
		\end{equation}
		which is statement (iv) of Theorem \ref{TheoremMainContinuous}.  
		The containment $\subseteq$ in the preceding holds because the right-hand side
		of \eqref{eq-CAZU} is a $C^*$-algebra which contains $A_z^u$ (mimic the first portion of the proof of
		\cite[Thm.~4.3.2]{MR1865513} to see this).
		On the other hand, the containment
		$\supseteq$ in \eqref{eq-CAZU} follows because
		$C^*(A_z^u)$ contains $\mathscr{K}$ by \eqref{eq-CompactContainment} and contains
		every operator of the form $A_{\phi}^u$ with $\phi$ in $C(\T)$ by \eqref{eq-ContinuousAlgebra}.
		
		The map $\gamma: C(\T) \to C^*(A_z^u)/\mathscr{K}$ defined by 
		\begin{equation*}
			\gamma(\phi) = A^{u}_{\phi} + \mathscr{K}
		\end{equation*}
		is a homomorphism by Lemma \ref{LemmaContinuousCommutators} and hence $\gamma(C(\T))$ 
		is a dense subalgebra of $C^*(A_z^u)/\mathscr{K}$ by \eqref{eq-ContinuousAlgebra}.
		In light of Lemma \ref{LemmaContinuousCompact}, we see that
		\begin{equation}\label{eq-KernelFound}
			\ker\gamma  = \{\phi \in C(\T): \phi|_{\sigma(u) \cap \T} \equiv 0\},
		\end{equation}
		whence the map 
		\begin{equation}\label{eq-FTS01}
			\widetilde{\gamma}: C(\T)/\ker \gamma \to  C^*(A_z^u)/\mathscr{K}
		\end{equation}
		defined by
		\begin{equation*}
			\widetilde{\gamma}(\phi + \ker \gamma) = A^{u}_{\phi} + \mathscr{K}
		\end{equation*}
		is an injective $*$-homomorphism.  By \cite[Thm.~I.5.5]{Davidson}, it follows that 
		$\widetilde{\gamma}$ is an isometric $*$-isomorphism.  Since
		\begin{equation}\label{eq-FTS02}
			C(\T)/\ker \gamma \cong C(\sigma(u) \cap \T)
		\end{equation}
		by \eqref{eq-KernelFound}, it follows that
			\begin{equation*}
			\sigma_e(A_{\phi}^u) = \sigma_{C(\sigma(u)\cap\T)}(\phi) 
			= \phi( \sigma(u)\cap \T) = \phi( \sigma_e(A_{z}^u)),
		\end{equation*}
		where $\sigma_{C(\sigma(u)\cap\T)}(\phi)$ denotes the spectrum of $\phi$ as an element of the
		Banach algebra $C(\sigma(u)\cap\T)$.  This yields statement (v).		
		We also note that putting \eqref{eq-FTS01} and \eqref{eq-FTS02} together
		shows that $C^*(A_z^u)/\mathscr{K}$ is isometrically $*$-isomorphic to $C(\sigma(u) \cap \T)$,
		which is statement (ii).
		
		We now need only justify statement (vii).  To this end, recall that
		a seminal result of Clark \cite{MR0301534} asserts that for each $\alpha$ in $\T$, the operator
		\begin{equation}\label{eq-ClarkOperator}
			U_{\alpha} := A_{z}^u + \frac{\alpha}{1 - \overline{u(0)} \alpha} k_{0} \otimes Ck_0
		\end{equation}
		on $\K_u$ is a cyclic unitary operator and, moreover, that every unitary, rank-one perturbation of 
		$A_z^u$ is of the form \eqref{eq-ClarkOperator}.  A complete exposition of this important result
		can be found in the text \cite{CMR}.  Since
		\begin{equation*}
			U_{\alpha} \equiv A_z^u \pmod{\mathscr{K}},
		\end{equation*}
		it follows that
		\begin{equation}\label{eq-ClarkFunction}
			\phi(U_{\alpha}) \equiv A_{\phi}^u \pmod{\mathscr{K}}
		\end{equation}
		for every $\phi$ in $C(\T)$.  This is because the norm on $\B(\K_u)$ dominates
		the quotient norm on $\B(\K_u)/\mathscr{K}$ and since any $\phi$ in $C(\T)$
		can be uniformly approximated by trigonometric polynomials.  Since $\mathscr{K} \subseteq C^*(A_z^u)$,
		it follows that
		\begin{equation*}
			C^*( U_{\alpha} ) + \mathscr{K} = C^*(A_z^u), 
		\end{equation*}
		which yields the desired result.
	\end{proof}

\section{Piecewise continuous symbols}\label{SectionPiecewise}

	Having obtained a truncated Toeplitz analogue of Coburn's work, it is of interest
	to see if one can also obtain a truncated Toeplitz version of Gohberg and Krupnik's 
	results concerning Toeplitz operators with piecewise continuous symbols \cite{GK}. 
	Although we have not yet been able to complete this work, we have obtained a few partial results which
	are worth mentioning.

	Let $PC := PC(\T)$ denote the $*$-algebra of piecewise
	continuous functions on $\T$.  To get started, we make the simplifying assumption that $u$ is inner and that
	\begin{equation*}
		\sigma(u) \cap \T = \{1\}.
	\end{equation*}
	For instance, $u$ could be a singular inner function with a single atom at $1$ or a Blaschke product
	whose zeros accumulate only at $1$.  Let 
	\begin{equation*}
		\mathscr{A}_{PC}^u = \{ A_{\phi}^u : \phi \in PC\}
	\end{equation*}
	denote the set of all truncated Toeplitz operators on $\K_u$ having symbols in $PC$.  The following
	lemma identifies the commutator ideal of $C^*(\mathscr{A}_{PC}^u)$.

	\begin{Lemma}
		$\mathscr{C}(C^*(\mathscr{A}_{PC}^u)) = \mathscr{K}$.
	\end{Lemma}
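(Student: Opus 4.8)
The plan is to establish the nontrivial inclusion by showing that the quotient $C^*(\mathscr{A}_{PC}^u)/\mathscr{K}$ is commutative, since the commutator ideal is the smallest closed ideal with commutative quotient. The reverse inclusion $\mathscr{K}\subseteq\mathscr{C}(C^*(\mathscr{A}_{PC}^u))$ comes for free from the machinery already in place: as $z\in C(\T)\subseteq PC$ we have $C^*(A_z^u)\subseteq C^*(\mathscr{A}_{PC}^u)$, so $C^*(\mathscr{A}_{PC}^u)$ is irreducible by Lemma~\ref{LemmaIrreducible}, and it contains the nonzero compact operator $[A_z^u,A_{\overline{z}}^u]$; hence $\mathscr{K}\subseteq C^*(\mathscr{A}_{PC}^u)$ and $\mathscr{K}\subseteq\mathscr{C}(C^*(\mathscr{A}_{PC}^u))$, exactly as in the proof of Theorem~\ref{TheoremMainContinuous} via \cite[Cor.~3.16.8]{ConwayCOT}.

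For commutativity of the quotient I would exploit the hypothesis $\sigma(u)\cap\T=\{1\}$ as strongly as possible: there is a single distinguished point on the circle. Fix once and for all a \emph{real-valued} function $j\in PC$ having a single jump, located at the point $1$ (for instance $j(e^{it})=1-\tfrac{t}{2\pi}$ for $0\le t<2\pi$). Because $j$ is real, $A_j^u$ is self-adjoint, so the unital $C^*$-algebra $\mathscr{B}:=C^*(A_j^u)$ is commutative. The goal is then to prove that $C^*(\mathscr{A}_{PC}^u)=\mathscr{B}+\mathscr{K}$. The inclusion $\supseteq$ is immediate (both $A_j^u$ and $\mathscr{K}$ lie in $C^*(\mathscr{A}_{PC}^u)$; and $\mathscr{B}+\mathscr{K}$ is a $C^*$-algebra since $\mathscr{K}$ is an ideal and its image in the Calkin algebra is closed). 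For $\subseteq$ it is enough to check that every generator satisfies
\[
A_{\phi}^u \equiv aI + bA_j^u \pmod{\mathscr{K}}\qquad\text{for some }a,b\in\C,
\]
for then $C^*(\mathscr{A}_{PC}^u)/\mathscr{K}\cong\mathscr{B}/(\mathscr{B}\cap\mathscr{K})$ is a quotient of a commutative algebra and hence commutative, finishing the argument.

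The heart of the matter is thus the displayed congruence. I would first record an auxiliary compactness fact generalizing the $(\Leftarrow)$ direction of Lemma~\ref{LemmaContinuousCompact}: \emph{if $h\in PC$ vanishes on a neighborhood of $1$, then $A_h^u$ is compact.} The proof is the same weak-to-norm argument: if $f_n\rightharpoonup 0$ in $\K_u$, then each $f_n$ continues analytically across $\T\setminus\{1\}=\T\setminus\sigma(u)$ by Lemma~\ref{LemmaContinuation}, the $f_n$ are uniformly bounded on the compact set $\overline{\{h\neq 0\}}\subseteq\T\setminus\{1\}$ (using $|f_n(\zeta)|=|\langle f_n,k_\zeta\rangle|\le\|f_n\|\,|u'(\zeta)|^{1/2}$ there) and tend to $0$ pointwise on it, so $\|A_h^u f_n\|\le\|hf_n\|_{L^2}\to0$ by dominated convergence. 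Granting this, for $\phi\in PC$ with finitely many jumps one lets $b$ be the jump of $\phi$ at $1$; then $\phi-bj$ has no jump at $1$, and cutting with a continuous bump that is $1$ near $1$ and supported away from the remaining jump points writes $\phi-bj=g+h$ with $g\in C(\T)$ and $h\in PC$ vanishing near $1$. By the auxiliary fact $A_h^u$ is compact, and by Lemma~\ref{LemmaContinuousCompact} (here $\sigma(u)\cap\T=\{1\}$) we have $A_g^u\equiv g(1)I\pmod{\mathscr{K}}$, giving $A_\phi^u\equiv g(1)I+bA_j^u\pmod{\mathscr{K}}$. A general $\phi\in PC$ is a uniform limit of symbols with finitely many jumps, so the congruence passes to the limit since $\|A_{\phi}^u-A_{\psi}^u\|\le\|\phi-\psi\|_\infty$ and the image of $\mathscr{B}$ in the Calkin algebra is norm closed. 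I expect the only real work to be this bookkeeping — formulating the auxiliary compactness lemma and organizing the decomposition cleanly; conceptually everything collapses precisely because $\sigma(u)\cap\T$ is a single point, so the one jump that survives modulo $\mathscr{K}$ can be normalized to a self-adjoint operator.
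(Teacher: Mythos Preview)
Your proof is correct and follows essentially the same route as the paper: define the single-jump function $\chi=j$, show that every $A_{\phi}^u$ with $\phi\in PC$ is congruent modulo $\mathscr{K}$ to an operator of the form $\alpha A_{\chi}^u+\beta I$, deduce that the quotient is commutative, and handle the reverse inclusion via irreducibility and \cite[Cor.~3.16.8]{ConwayCOT}. The only difference is packaging: the paper subtracts $\phi_+(1)\chi+\phi_-(1)(1-\chi)$ from an \emph{arbitrary} $\phi\in PC$ in one stroke and then invokes the remark after Theorem~\ref{TheoremMainContinuous} (on symbols in $\mathscr{X}_u$) to conclude the remainder gives a compact operator, whereas you unpack that remark explicitly via your auxiliary compactness lemma and the decomposition $g+h$, at the cost of first treating finitely many jumps and then passing to a uniform limit.
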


	\begin{proof}
		Let 
		\begin{equation}\label{eq-Chi}
			\qquad\qquad\chi(e^{i \theta}) := 1 - \frac{\theta}{2 \pi}, \qquad 0 \leq \theta < 2 \pi,
		\end{equation}
		and notice that $\chi$ belongs to $PC$ and satisfies
		\begin{equation*}
			\chi_{+}(1) := \lim_{\theta \to 0} \chi(e^{i \theta}) = 1, \qquad \chi_{-}(1):= \lim_{\theta \to 2 \pi} \chi(e^{i \theta}) = 0.
		\end{equation*}
		If $\phi$ is any function in $PC$, then it follows that
		\begin{equation*}
			\phi - \phi_{+}(1) \chi - \phi_{-}(1) (1 - \chi)
		\end{equation*}
		is continuous at $1$ and assumes the value zero there. By the remarks following Theorem \ref{TheoremMainContinuous}
		in the introduction, we see that
		\begin{equation}\label{eq-PCmodCompact}
			A^{u}_{\phi} \equiv \alpha A^{u}_{\chi} + \beta I \pmod{\mathscr{K}},
		\end{equation}
		where $\alpha = \phi_+(1)  - \phi_-(1)$ and $\beta = \phi_-(1)$.  In light of \eqref{eq-PCmodCompact}
		it follows that
		\begin{equation*}
			[A_{\phi}, A_{\psi}] \equiv 0 \pmod{\mathscr{K}}
		\end{equation*}
		for any $\phi,\psi$ in $PC$ whence $\mathscr{C}(C^*(\mathscr{A}_{PC}^u)) \subseteq \mathscr{K}$.
		Since $A_z^u$ belongs to $\mathscr{A}_{PC}^u$, we conclude that $\mathscr{C}(C^*(\mathscr{A}_{PC}^u))$ 
		contains the nonzero commutator $[A_z^u,A_{\overline{z}}^u]$
		whence $C^*(\mathscr{A}_{PC}^u)$ is irreducible
		by Lemma \ref{LemmaIrreducible}.  Moreover, By \cite[Cor.~3.16.8]{ConwayCOT} we conclude that 
		$\mathscr{K}\subseteq\mathscr{C}(C^*(\mathscr{A}_{PC}^u))$ which concludes the proof.
	\end{proof}

	\begin{Lemma}
		$C^*(\mathscr{A}_{PC}^u) = C^{*}(A^{u}_{\chi}) + \mathscr{K}$.
	\end{Lemma}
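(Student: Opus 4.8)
The plan is to establish the two inclusions separately. The reverse inclusion is immediate from the preceding lemma, and the forward inclusion rests entirely on the congruence \eqref{eq-PCmodCompact} already obtained in that lemma's proof; so the argument is short.

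First I would record that $C^{*}(A^{u}_{\chi}) + \mathscr{K}$ is genuinely a unital $C^{*}$-algebra. It is the sum of the $C^{*}$-subalgebra $C^{*}(A^{u}_{\chi})$ of $\B(\K_u)$ with the closed two-sided ideal $\mathscr{K}$, and such a sum is always norm closed: one sees this by noting that the natural map identifies $\bigl(C^{*}(A^{u}_{\chi}) + \mathscr{K}\bigr)/\mathscr{K}$ with $C^{*}(A^{u}_{\chi})/\bigl(C^{*}(A^{u}_{\chi}) \cap \mathscr{K}\bigr)$, which is a $C^{*}$-algebra and hence complete, so that $C^{*}(A^{u}_{\chi}) + \mathscr{K}$ is complete as well. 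It is unital because $C^{*}(A^{u}_{\chi})$ contains $I$.

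For the inclusion $\supseteq$: the operator $A^{u}_{\chi}$ lies in $\mathscr{A}_{PC}^{u}$, so $C^{*}(A^{u}_{\chi}) \subseteq C^{*}(\mathscr{A}_{PC}^{u})$; and $\mathscr{K} = \mathscr{C}(C^{*}(\mathscr{A}_{PC}^{u})) \subseteq C^{*}(\mathscr{A}_{PC}^{u})$ by the preceding lemma. Adding these gives $C^{*}(A^{u}_{\chi}) + \mathscr{K} \subseteq C^{*}(\mathscr{A}_{PC}^{u})$. For the inclusion $\subseteq$: since the right-hand side is a unital $C^{*}$-algebra by the first step, it suffices to check that it contains each generator $A^{u}_{\phi}$, $\phi \in PC$, of $C^{*}(\mathscr{A}_{PC}^{u})$. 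But \eqref{eq-PCmodCompact} says precisely that $A^{u}_{\phi} - \alpha A^{u}_{\chi} - \beta I \in \mathscr{K}$ for suitable scalars $\alpha, \beta$, while $\alpha A^{u}_{\chi} + \beta I \in C^{*}(A^{u}_{\chi})$; hence $A^{u}_{\phi} \in C^{*}(A^{u}_{\chi}) + \mathscr{K}$, as needed.

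As for the main obstacle: there is none of real substance, since all the work has already been done — in the remarks following Theorem \ref{TheoremMainContinuous}, which furnish \eqref{eq-PCmodCompact}, and in the preceding lemma, which identifies the commutator ideal with $\mathscr{K}$. The only point requiring a moment's care is the closedness of $C^{*}(A^{u}_{\chi}) + \mathscr{K}$, for which the standard quotient argument indicated above suffices.
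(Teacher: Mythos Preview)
Your proof is correct and follows essentially the same approach as the paper's: both directions rest on \eqref{eq-PCmodCompact} together with the inclusion $\mathscr{K}\subseteq C^{*}(\mathscr{A}_{PC}^{u})$ from the preceding lemma. You are simply more explicit than the paper in verifying that $C^{*}(A^{u}_{\chi})+\mathscr{K}$ is closed (hence a $C^{*}$-algebra), a point the paper takes for granted.
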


	\begin{proof}
		The containment $\supseteq$ is clear from \eqref{eq-PCmodCompact}
		since $C^*(\mathscr{A}_{PC}^u))$ contains $\mathscr{K}$.  Conversely, the containment
		$\subseteq$ follows immediately from \eqref{eq-PCmodCompact}.
	\end{proof}

%	We now require the following lemma \cite[Cor.~I.5.6]{Davidson}:
%	
%	\begin{Lemma}
%		If $\mathscr{I}$ is an ideal of a $C^*$-algebra $\mathscr{A}$ and $\mathscr{B}$ is a $C^*$-subalgebra
%		of $\mathscr{A}$, then $\mathscr{B} + \mathscr{I}$ is a $C^*$-algebra and
%		\begin{equation*}
%			\mathscr{B} / (\mathscr{B} \cap \mathscr{I}) \cong (\mathscr{B} + \mathscr{I}) / \mathscr{I}
%		\end{equation*}
%		is a $*$-isomorphism.
%	\end{Lemma}

	From the discussion above and \cite[Cor.~I.5.6]{Davidson} we know that 
	\begin{equation*}
		\frac{ C^*(\mathscr{A}_{PC}^u) }{ \mathscr{C}(C^*(\mathscr{A}_{PC}^u))} 
		= \frac{C^{*}(A^{u}_{\chi}) + \mathscr{K}}{ \mathscr{K}}
		\cong \frac{C^{*}(A^{u}_{\chi})}{C^{*}(A^{u}_{\chi}) \cap \mathscr{K}}.
	\end{equation*}
	is a commutative $C^*$-algebra.  Unfortunately, we are unable to identify the algebra
	$C^*(A_{\chi}^u)$ in a more concrete manner.  This highlights the important fact that truncated
	Toeplitz operators such as $A_{\chi}^u$, whose symbols are neither analytic nor coanalytic, are 
	difficult to deal with.
	
	\begin{Question}
		Suppose that $\sigma(u) = \{1\}$.  Give a concrete description of $C^*(A_{\chi}^u)$
		where $\chi$ denotes the piecewise continuous function \eqref{eq-Chi}.
	\end{Question}
	
	\begin{Question}
		Provide an analogue of the Gohberg-Krupnik result for $\mathscr{A}_{PC}^u$.  In other words,
		give a description of $C^*(\mathscr{A}_{PC}^u)$ analogous to that of Theorem \ref{TheoremMainContinuous}.
	\end{Question}

\bibliography{CAGTTO}

\end{document}